\newtheorem{theorem}{Theorem}[section]
\newtheorem{lemma}[theorem]{Lemma}
\newtheorem{corollary}[theorem]{Corollary}
\theoremstyle{definition}
\newtheorem{definition}[theorem]{Definition}
\begin{document}

\title[The Nowicki Conjecture
for free metabelian Lie algebras]
{The Nowicki Conjecture\\
for free metabelian Lie algebras}
\author[Vesselin Drensky, {\c S}ehmus F{\i}nd{\i}k]
{Vesselin Drensky and {\c S}ehmus F{\i}nd{\i}k}
\address{Institute of Mathematics and Informatics,
Bulgarian Academy of Sciences,
1113 Sofia, Bulgaria}
\email{drensky@math.bas.bg}
\address{Department of Mathematics,
\c{C}ukurova University, 01330 Balcal\i,
 Adana, Turkey}
\email{sfindik@cu.edu.tr}

\thanks
{The research of both authors was partially supported by Grant I02/18 of the Bulgarian Science Fund.}
\thanks
{The research of the second named author was partially supported by the
Council of Higher Education (Y\"OK) in Turkey.}

\subjclass[2010]{17B01; 17B30; 17B40; 13N15; 13A50.}
\keywords{Free metabelian Lie algebras; algebras of constants; Weitzenb\"ock derivations.}

\begin{abstract}
Let $K[X_d]=K[x_1,\ldots,x_d]$ be the polynomial algebra in $d$ variables over a field $K$ of characteristic 0.
The classical theorem of Weitzenb\"ock from 1932 states that for linear locally nilpotent derivations $\delta$ (known as Weitzenb\"ock derivations)
the algebra of constants $K[X_{d}]^{\delta}$ is finitely generated.
When the Weitzenb\"ock derivation $\delta$ acts on the polynomial algebra $K[X_d,Y_d]$ in $2d$ variables by
$\delta(y_i)=x_i$, $\delta(x_i)=0$, $i=1,\ldots,d$,
Nowicki conjectured that $K[X_d,Y_d]^{\delta}$ is generated by $X_d$ and $x_iy_j-y_ix_j$ for all $1\leq i<j\leq d$.
There are several proofs based on different ideas confirming this conjecture.
Considering arbitrary Weitzenb\"ock derivations of the free $d$-generated metabelian Lie algebra $F_d$,
with few trivial exceptions, the algebra $F_d^{\delta}$ is not finitely generated.
However, the vector subspace $(F_d')^{\delta}$ of the commutator ideal $F_d'$ of $F_d$ is finitely generated
as a $K[X_d]^{\delta}$-module. In this paper we study an analogue of the Nowicki conjecture in the Lie algebra setting and
give an explicit set of generators of the $K[X_d,Y_d]^{\delta}$-module $(F_{2d}')^{\delta}$.
\end{abstract}

\maketitle

\section{Introduction}

A linear operator $\delta$ of a (not necessarily commutative or associative) algebra $R$ over a field $K$ is called a {\it derivation} if
\[
\delta(uv)=\delta(u)v+u\delta(v)\text{ for all }u,v\in R.
\]
The kernel $R^{\delta}$ of $\delta$ is called the {\it algebra of constants of} $\delta$.

In the sequel $K$ will be a field of characteristic 0.  Let  $K[X_d]=K[x_1,\ldots,x_d]$,  $d\geq 2$, be the polynomial algebra
in $d$ variables over $K$. A derivation  $\delta$ of  $K[X_d]$ acting as a nonzero nilpotent linear operator of the vector space
$KX_d$ with basis $X_d$ is called a {\it Weitzenb\"ock derivation}. The Jordan normal form $J(\delta)=(J_1,\ldots ,J_s)$ of the matrix of $\delta$
considered as a linear operator acting on $KX_d$ consists of Jordan cells $J_i$, $i=1,\ldots ,s$, with zero diagonals.
In 1932 Weitzenb\"ock \cite{W} proved that the algebra of constants
\[
K[X_d]^{\delta}=\ker{\delta}=\{u\in K[X_d]\mid \delta(u)=0\}
\]
is finitely generated. For more information on Weitzenb\"ock derivations
one can see the books by Nowicki \cite{N},  Derksen and Kemper \cite{DK}, and Sturmfels \cite{St}.
The algebra of constants $K[X_d]^{\delta}$ can be considered also from the point of view of classical invariant theory.
The linear operator $\alpha\delta$ of $KX_d$ is nilpotent
for all $\alpha\in K$. The exponent
\[
\exp(\alpha\delta)=1+\frac{\alpha\delta}{1!}+\frac{\alpha^2\delta^2}{2!}+\cdots
\]
is a well defined invertible linear operator of $KX_d$ and this defines a $d$-dimensional representation of the unitriangular group
\[
UT_2(K)=\left\{\left(\begin{matrix}1&\alpha\\
0&1\\
\end{matrix}\right)\mid\alpha\in K\right\}.
\]
This action can be extended diagonally on the whole algebra $K[X_d]$ and $K[X_d]^{\delta}$ is equal to the algebra of invariants $K[X_d]^{UT_2(K)}$.

Let $K[X_d,Y_d]=K[x_1,\ldots,x_d,y_1,\ldots,y_d]$, be the polynomial algebra
in $2d$ variables and let ${\delta}$ be the Weitzenb\"ock derivation defined by ${\delta}(y_i)=x_i$, ${\delta}(x_i)=0$, $i=1,\ldots,d$.
In the language of invariant theory $K[X_d,Y_d]^{\delta}$ is the algebra of invariants for the action of the additive group $(K,+)$ on $K[X_d,Y_d]$ by
\[
\alpha:x_i\to x_i,y_i\to y_i+\alpha x_i,\quad i=1,\ldots,d, \quad\alpha\in K.
\]
In 1994 Nowicki conjectured \cite[p. 76, Conjecture 6.9.10]{N} that the algebra of constants  $K[X_d,Y_d]^{\delta}$ is generated by $x_1,\ldots,x_d$
and the determinants
\begin{equation}\label{u_pq}
u_{pq}=\begin{vmatrix}
x_p & y_p \\
x_q &y_q\\
\end{vmatrix}, \quad 1\leq p<q \leq d.
\end{equation}
This conjecture attracted many mathematicians and was verified by several authors with proofs based on different ideas:
In his Ph.D. thesis in 2004 Khoury \cite{K1, K2} gave a computational proof using Gr\"obner basis techniques.
The unpublished proofs of Derksen and Panyushev applied ideas of classical invariant theory.
Several proofs appeared in 2009. Drensky and Makar-Limanov \cite{DML} gave an elementary proof using easy arguments from undergraduate algebra
and a simple induction only, without involving any invariant theory. In his proof Bedratyuk \cite{Bed}
reduced the Nowicki conjecture to a well known problem of classical invariant theory.
Kuroda \cite{Kuroda} gave a short proof based on the ideas of Kurano \cite{Kurano} in his study on the analogue in positive characteristic
of the Roberts' counterexample to the Hilbert fourteenth problem. As Kuroda mentioned Hashimoto informed him that
Goto, Hayasaka, Kurano, and Nakamura \cite{GHKN} and Miyazaki \cite{M} determined sets of generators for certain invariant rings
where $K[X_d,Y_d]^{\delta}$ is included,
and this gives one more proof of the Nowicki conjecture.

Let $K\langle X_d\rangle$, $d\geq 2$, be the free (unitary or nonunitary) associative algebra freely generated by $X_d$
and let, as above, $\delta$ be a nilpotent linear operator
acting on the vector space $KX_d$. Then the action of $\delta$ on $KX_d$ can be extended to an action as a derivation on the whole algebra $K\langle X_d\rangle$.
If $V$ is a T-ideal (or a verbal ideal) of $K\langle X_d\rangle$, i.e., an ideal which is invariant under all endomorphisms of $K\langle X_d\rangle$, then
it is well known that $\delta(V)\subseteq V$ and $\delta$ induces a derivation on the factor algebra $K\langle X_d\rangle/V$.
We shall use the same notation $\delta$ for this derivation of $K\langle X_d\rangle/V$ and again shall call it Weitzenb\"ock.
The factor algebra $F_d({\mathfrak V})=K\langle X_d\rangle/V$ is a {\it relatively free algebra in the variety} $\mathfrak V$ of associative algebras
defined by the polynomial identities from $V$. As in the case of polynomial algebras,
the kernel $F_d({\mathfrak V})^{\delta}$ of $\delta$ is the algebra of constants of $\delta$.
Similarly, if $L_d=L(X_d)$, $d\geq 2$, is the free Lie algebra freely generated by $X_d$ and $W$ is a T-ideal (or a verbal ideal)
of $L_d$, then the action of $\delta$ on $KX_d$ defines a Weitzenb\"ock derivation on the relatively free algebra $F_d({\mathfrak W})=L_d/W$
in the variety of Lie algebras $\mathfrak W$ defined by the polynomial identities from $W$.
See e.g., the book by Bahturin \cite{B} for a background on varieties of Lie algebras,
the book \cite{D} by one of the authors for associative PI-algebras,
and his paper \cite{DG} with Gupta for Weitzenb\"ock derivations
acting on free and relatively free algebras, and for the properties of their algebras of constants.

In the sequel, let
\[
F_d=F_d({\mathfrak A}^2)=L_d/L_d''
\]
be the factor algebra of $L_d$ modulo the second term $L_{2d}''$ of the derived series of $L_d$.
This is the {\it free metabelian Lie algebra} generated by $X_d$. It is a relatively free algebra in the variety ${\mathfrak A}^2$ of
the metabelian (solvable of class 2) Lie algebras defined by the identity
$[[x_1,x_2],[x_3,x_4]] = 0$. The variety ${\mathfrak A}^2$ has a key position in the theory of varieties of Lie algebras.
By the well-known dichotomy a variety $\mathfrak W$ of Lie algebras either satisfies the Engel condition
and by the theorem of Zelmanov \cite{Z} is nilpotent or contains the metabelian variety ${\mathfrak A}^2$.
Since finitely generated nilpotent algebras are finite dimensional, the algebra $F_d=F_d({\mathfrak A}^2)$ is the minimal relatively free algebra
which is not finite dimensional. If $\delta$ is a Weitzenb\"ock derivation of $F_d$, then
Drensky and Gupta \cite{DG} showed that $F_d^{\delta}$ is finitely generated only in the trivial case
when the Jordan normal form of $\delta$ consists of one Jordan cell of size $2\times 2$ and $d-2$ Jordan cells of size $1\times 1$, i.e.,
when the rank of the matrix of $\delta$ is equal to 1. The commutator ideal $F_d'$ has a natural structure of a $K[X_d]$-module.
Recently Dangovski and the authors \cite{DDF} established that the vector space $(F_d')^{\delta}$ of the constants of $\delta$ in the commutator
ideal $F_d'$ of $F_d$ is a finitely generated $K[X_d]^{\delta}$-module.
Freely speaking, this means that the algebra of constants $F_d^{\delta}$ is very close to be finitely generated.

In the present paper we consider the free metabelian Lie algebra $F_{2d}$  of rank $2d$ generated by the set $X_d\cup Y_d$.
We assume that ${\delta}$ is its Weitzenb\"ock derivation acting similarly as in the Nowicki conjecture.
We give a complete set of generators of the $K[X_d,Y_d]^{\delta}$-module $(F_{2d}')^{\delta}$.
This gives also an infinite set of generators of the Lie algebra $(F_{2d})^{\delta}$.

\section{Preliminaries}

Till the end of the paper we fix the notation $F_{2d}=L_{2d}/L_{2d}''$ for the free metabelian Lie algebra of rank $2d$
freely generated by $X_d\cup Y_d=\{x_1,\ldots,x_d,y_1,\ldots,y_d\}$. We assume that all Lie commutators are left normed, e.g.,
\[
[z_1,z_2,z_3]=[[z_1,z_2],z_3]=[z_1,z_2]\text{ad}z_3
\]
for all $z_1,z_2,z_3\in F_{2d}$. The metabelian identity implies, see, e.g., \cite{B}, that
\[
[z_{j_1},z_{j_2},z_{j_{\sigma(3)}},\ldots,z_{j_{\sigma(k)}}]
=[z_{j_1},z_{j_2},z_{j_3},\ldots,z_{j_k}],
\]
where $\sigma$ is an arbitrary permutation of $3,\ldots,k$. Thus the polynomial algebra $K[X_d,Y_d]$ acts on $F_{2d}'$ by the rule
\[
uf(x_1,\ldots,x_d,y_1,\ldots,y_d)=uf(\text{ad}x_1,\ldots,\text{ad}x_d,\text{ad}y_1,\ldots,\text{ad}y_d),
\]
where $u\in F_{2d}'$, $f(X_d,Y_d)=f(x_1,\ldots,x_d,y_1,\ldots,y_d)\in K[X_d,Y_d]$.

We construct the abelian wreath product due to Shmel'kin \cite{Sh}.
Let $\Lambda_{2d}=K(A_d\cup B_d)$ and $\Gamma_{2d}=K(P_d\cup Q_d)$ denote the abelian Lie algebras with linear bases
\[
A_d\cup B_d=\{a_1,\ldots,a_d\}\cup\{b_1,\ldots,b_d\}\text{ and }P_d\cup Q_d=\{p_1,\ldots,p_d\}\cup\{q_1,\ldots,q_d\},
\]
respectively,
and let $C_{2d}$ be the free right
$K[X_d,Y_d]$-module with free generators $A_d\cup B_d$.
Equipping $C_{2d}$ with trivial multiplication we give it the structure of an abelian Lie algebra.
The abelian wreath product
$W_{2d}=\Lambda_{2d}\text{\rm wr}\Gamma_{2d}$ is equal to the semidirect sum $C_{2d}\leftthreetimes \Gamma_{2d}$. The elements
of $W_{2d}$ are of the form
\[
\sum_{i=1}^da_if_i(X_d,Y_d)+\sum_{i=1}^db_ig_i(X_d,Y_d)+\sum_{i=1}^d\alpha_ip_i+\sum_{i=1}^d\beta_iq_i,
\]
where $\alpha_i,\beta_i\in K$.
The multiplication in $W_{2d}$ is defined by
\[
[C_{2d},C_{2d}]=[\Gamma_{2d},\Gamma_{2d}]=0,
\]
\[
[a_if_i(X_d,Y_d),p_j]=a_if_i(X_d,Y_d)x_j,\quad [b_if_i(X_d,Y_d),p_j]=b_if_i(X_d,Y_d)x_j,
\]
\[
[a_if_i(X_d,Y_d),q_j]=a_if_i(X_d,Y_d)y_j,\quad [b_if_i(X_d,Y_d),q_j]=b_if_i(X_d,Y_d)y_j,
\]
$j=1,\ldots,d$.
Hence $W_{2d}$ is a metabelian Lie algebra and every mapping
\[
\{x_1,\ldots,x_n,y_1,\ldots,y_n\}\to W_{2d}
\]
can be extended to a homomorphism $F_{2d}\to W_{2d}$. As a special case of the embedding theorem of Shmel'kin,
the homomorphism $\varepsilon:F_{2d}\to W_{2d}$ defined by
\[
\varepsilon(x_i)=a_i+p_i, \quad \varepsilon(y_i)=b_i+q_i,\quad i=1,\ldots,d,
\]
is a monomorphism. By this action of $\varepsilon$, the commutator ideal $F_{2d}'$ is embedded into the free right
$K[X_d,Y_d]$-module
\[
C_{2d}=a_1K[X_d,Y_d]\oplus \cdots \oplus a_dK[X_d,Y_d] \oplus b_1K[X_d,Y_d]\oplus \cdots \oplus b_dK[X_d,Y_d]
\]
as follows:
\[
\varepsilon([x_i,x_j])= a_ix_j-a_jx_i, \quad \varepsilon([y_i,y_j])= b_iy_j-b_jy_i, \quad \varepsilon([x_i,y_j])= a_iy_j-b_jx_i,
\]
and then, by induction, if $w\in F_{2d}'$, then
\[
\varepsilon([w,x_j])=\varepsilon(w)x_j,\quad \varepsilon([w,y_j])=\varepsilon(w)y_j,\quad j=1,\ldots,d.
\]
As a consequence of this construction, we have the following result.

\begin{lemma}\label{Lie element}\cite[Theorem 2]{Sh}
An element
\[
\sum_{i=1}^da_if_i(X_d,Y_d)+\sum_{i=1}^db_ig_i(X_d,Y_d)
\]
 from $C_{2d}$ is an image of an element from the commutator ideal $F_{2d}'$ if and only if
\[
\sum_{i=1}^dx_if_i(X_d,Y_d)+\sum_{i=1}^dy_ig_i(X_d,Y_d)=0.
\]
\end{lemma}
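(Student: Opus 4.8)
The plan is to interpret the $K$-linear map
\[
\mu:C_{2d}\to K[X_d,Y_d],\quad \mu\left(\sum_{i=1}^d a_if_i+\sum_{i=1}^db_ig_i\right)=\sum_{i=1}^dx_if_i+\sum_{i=1}^dy_ig_i,
\]
as the first differential of the Koszul complex attached to the sequence $x_1,\ldots,x_d,y_1,\ldots,y_d$, and to identify $\varepsilon(F_{2d}')$ with $\ker\mu$. The assertion to be proved is exactly the equality $\varepsilon(F_{2d}')=\ker\mu$.

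First I would establish the inclusion $\varepsilon(F_{2d}')\subseteq\ker\mu$ (necessity). By the metabelian identity the ideal $F_{2d}'$ is generated as a right $K[X_d,Y_d]$-module by the commutators $[x_i,x_j]$, $[y_i,y_j]$, $[x_i,y_j]$, and a direct computation from the displayed formulas for $\varepsilon$ gives $\mu(\varepsilon([x_i,x_j]))=x_ix_j-x_jx_i=0$, and likewise $\mu(\varepsilon([y_i,y_j]))=\mu(\varepsilon([x_i,y_j]))=0$. It then suffices to note that $\mu$ is a homomorphism of right $K[X_d,Y_d]$-modules, so that $\varepsilon([w,x_j])=\varepsilon(w)x_j$ yields $\mu(\varepsilon([w,x_j]))=\mu(\varepsilon(w))x_j$, and similarly for $y_j$. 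Hence vanishing of $\mu\circ\varepsilon$ on the module generators propagates to all of $F_{2d}'$ by induction on commutator length.

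The harder inclusion is $\ker\mu\subseteq\varepsilon(F_{2d}')$ (sufficiency), and this is where the main work lies. After relabelling the variables as $v_1,\ldots,v_{2d}=x_1,\ldots,x_d,y_1,\ldots,y_d$ and the free generators of $C_{2d}$ as $c_1,\ldots,c_{2d}=a_1,\ldots,a_d,b_1,\ldots,b_d$, an element of $\ker\mu$ is precisely a syzygy $\sum_k v_kh_k=0$ of the sequence $v_1,\ldots,v_{2d}$. Since these are distinct variables of the polynomial ring $K[X_d,Y_d]$, they form a regular sequence, so the module of first syzygies is generated by the Koszul relations $v_kc_l-v_lc_k$ for $k<l$. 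I would prove this by the standard induction on the length of the sequence: the base case uses that a single variable is a nonzerodivisor, while the inductive step uses that $v_k$ is a nonzerodivisor modulo $(v_1,\ldots,v_{k-1})$ to peel off the top coefficient and reduce it, modulo a Koszul relation, to a syzygy in fewer variables.

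Finally I would match the combinatorics: a direct check against the formulas for $\varepsilon$ shows that each Koszul relation $v_kc_l-v_lc_k$ equals, up to sign, the image $\varepsilon([v_k,v_l])$ of one of the basic commutators $[x_i,x_j]$, $[x_i,y_j]$, $[y_i,y_j]$. Combined with the module compatibility $\varepsilon([w,x_j])=\varepsilon(w)x_j$ and $\varepsilon([w,y_j])=\varepsilon(w)y_j$, which identifies the right $K[X_d,Y_d]$-module generated by these images with $\varepsilon(F_{2d}')$, this gives $\ker\mu\subseteq\varepsilon(F_{2d}')$ and completes the proof. The principal obstacle is the regular-sequence syzygy step; everything else is bookkeeping forced by the explicit description of $\varepsilon$.
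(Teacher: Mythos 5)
Your proof is correct, but note that the paper itself offers no proof of this lemma: it is quoted directly as a special case of Shmel'kin's embedding theorem (\cite[Theorem 2]{Sh}), which is the standard tool in the free metabelian setting. Your argument is therefore a genuinely different, self-contained route. The necessity direction is the same routine check any proof must make: the metabelian identity makes $F_{2d}'$ a $K[X_d,Y_d]$-module generated by the quadratic commutators, $\mu\circ\varepsilon$ kills each of them, and $\mu$ is a module map, so the vanishing propagates. All the substance is in the converse, which you correctly reduce to the fact that the first syzygy module of the sequence of $2d$ distinct variables is generated by the Koszul relations; this is exactness of the Koszul complex in homological degree one for a regular sequence, and the inductive peeling argument you sketch (using that $v_k$ is a nonzerodivisor modulo $(v_1,\ldots,v_{k-1})$ to write the top coefficient in the ideal generated by the earlier variables and subtract off Koszul relations) is a correct proof of it. The bookkeeping also checks out: every Koszul relation $c_kv_l-c_lv_k$ is, up to sign, $\varepsilon$ of one of $[x_i,x_j]$, $[x_i,y_j]$ (including the diagonal case $[x_i,y_i]$), or $[y_i,y_j]$, and the compatibility $\varepsilon([w,x_j])=\varepsilon(w)x_j$, $\varepsilon([w,y_j])=\varepsilon(w)y_j$ identifies $\varepsilon(F_{2d}')$ with the $K[X_d,Y_d]$-submodule these elements generate, giving both inclusions at once. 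What your route buys is independence from the wreath-product literature: the lemma becomes a consequence of standard commutative algebra. What the citation buys the authors is brevity and the connection to Shmel'kin's general theorem, which is the framework on which the rest of the paper's identification of $F_{2d}'$ inside $C_{2d}$ is built.
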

It follows immediately from Lemma \ref{Lie element} that the image $\varepsilon(F_{2d}')$ of $F_{2d}'$ in $C_{2d}$
is a submodule of the $K[X_d,Y_d]$-module $C_{2d}$.
In the sequel we shall identify the elements of $F_{2d}'$ with their images in $C_{2d}$.

If $\delta$ is a Weitzenb\"ock derivation of $K[X_d,Y_d]$ we shall assume that it acts on $K[X_d,Y_d]$ by the rule
\[
\delta(y_i)=x_i, \delta(x_i)=0,\quad i=1,\ldots,d,
\]
and shall use without reference that
the algebra of constants $K[X_d,Y_d]^{\delta}$ is generated by $x_1,\ldots,x_d$
and the determinants (\ref{u_pq})
as conjectured by Nowicki \cite{N} and proved in \cite{K1, K2, DML, Bed, Kuroda}.
In this special case the Jordan normal form $J(\delta)$ of $\delta$ consist of $2\times 2$ Jordan cells only, i.e.,
\[
J(\delta)=\left(\begin{matrix}
0&1&\cdots&0&0\\
0&0&\cdots&0&0\\
\vdots&\vdots&\ddots&\vdots&\vdots\\
0&0&\cdots&0&1\\
0&0&\cdots&0&0\\
\end{matrix}\right).
\]
The action of $\delta$
on $\{a_i,b_i\mid i=1,\ldots,d\}$ will be defined in the same way as on $\{x_i,y_i\mid i=1,\ldots,d\}$.
Thus $\delta$ is extended to a derivation of $F_{2d}$ and $W_{2d}$.
The vector space $C_{2d}^{\delta}$ of the constants of $\delta$ in the free $K[X_d,Y_d]$-module $C_{2d}$
is a $K[X_d,Y_d]^{\delta}$-module. The following theorem is a partial case of a result of \cite{DDF}.

\begin{theorem}
Let $\delta$ be a Weitzenb\"ock derivation of the free metabelian Lie algebra $F_{2d}$. Then
the vector space $(F_{2d}')^{\delta}$ of the constants of $\delta$ in the commutator
ideal $F_{2d}'$ of $F_{2d}$ is a finitely generated $K[X_d,Y_d]^{\delta}$-module.
\end{theorem}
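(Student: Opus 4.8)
The plan is to translate the statement into the language of the $SL_2$-action underlying $\delta$ and then to reduce the required finiteness to the classical finiteness theorems for reductive groups. First I would use that in the present situation $J(\delta)$ consists only of $2\times2$ cells, so that $KX_d\oplus KY_d\cong V^{\oplus d}$, where $V$ is the two-dimensional standard $SL_2$-module, and $\delta$ acts as the raising operator $e$ of an $\mathfrak{sl}_2$-triple $(e,h,f)$. Since $\mathrm{char}\,K=0$ this integrates to a rational, locally finite action of $SL_2$ on the symmetric algebra $R:=K[X_d,Y_d]$ and, with the same action on $\langle a_i,b_i\rangle\cong V$, on the free $R$-module $C_{2d}\cong V^{\oplus d}\otimes_K R$ of rank $2d$. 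Writing $U=\{\exp(\alpha\delta)\mid\alpha\in K\}$ for the maximal unipotent subgroup of $SL_2$, one has $R^{\delta}=R^{U}$, and for every $\delta$-stable $R$-submodule $M\subseteq C_{2d}$ also $M^{\delta}=M^{U}$.

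Next I would pin down the module to which finiteness must be applied. By Lemma \ref{Lie element} the submodule $\varepsilon(F_{2d}')$ is exactly the kernel of the $R$-linear map $\theta:C_{2d}\to R$ determined by $a_i\mapsto x_i$, $b_i\mapsto y_i$. This $\theta$ is the $SL_2$-equivariant multiplication map induced by the identification $\langle a_i,b_i\rangle\cong\langle x_i,y_i\rangle$, so $\varepsilon(F_{2d}')=\ker\theta$ is an $SL_2$-stable, hence $\delta$-stable, $R$-submodule of $C_{2d}$; being the kernel of a homomorphism of finitely generated modules over the Noetherian ring $R$, it is itself finitely generated over $R$. As $\varepsilon$ is an injective $\delta$-equivariant map, it carries $(F_{2d}')^{\delta}$ isomorphically onto $\varepsilon(F_{2d}')^{U}$ as a module over $R^{\delta}=R^{U}$. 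Thus the theorem reduces to the following purely module-theoretic principle: if $M$ is a finitely generated $R$-module equipped with a compatible rational $SL_2$-action, then $M^{U}$ is a finitely generated $R^{U}$-module.

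To establish this principle I would pass from the non-reductive group $U$ to the reductive group $SL_2$ by Hadziev's device. For $SL_2$ the coordinate ring of the base affine space is the honest polynomial ring $K[SL_2/U]=K[u,v]=\bigoplus_{m\ge0}V_m$, with $V_m$ the irreducible module of highest weight $m$. Using the $SL_2$-invariant pairings on $V_m\otimes V_m$ one obtains the Hadziev isomorphisms $R^{U}\cong(R\otimes_K K[u,v])^{SL_2}$ and $M^{U}\cong(M\otimes_K K[u,v])^{SL_2}$, compatibly with the module structures. Now $R\otimes_K K[u,v]$ is a finitely generated $K$-algebra carrying a rational action of the reductive group $SL_2$, and $M\otimes_K K[u,v]$ is a finitely generated module over it, so the Hilbert--Nagata finiteness theorem for reductive groups, in its algebra and module forms, yields that $R^{U}$ is a finitely generated $K$-algebra and that $M^{U}$ is a finitely generated $R^{U}$-module. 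Taking $M=\varepsilon(F_{2d}')$ then completes the proof.

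The hard part is exactly this final reduction: $\delta$ is the infinitesimal generator of the non-reductive unipotent group $U$, for which finite generation of invariants is not automatic, so the whole argument rests on converting $U$-invariants into $SL_2$-invariants where reductivity applies. A self-contained alternative avoiding the Hadziev isomorphism is to form the square-zero extension $A=R\oplus M$ with $M^{2}=0$, a finitely generated commutative $K$-algebra on which $SL_2$ acts rationally; Grosshans' theorem that $A^{U}$ is finitely generated, combined with the $SL_2$-invariant grading $A_0=R$, $A_1=M$, forces $A^{U}=R^{U}\oplus M^{U}$ and hence $M^{U}=\sum_j R^{U}m_j$ for finitely many homogeneous $m_j\in M^{U}$. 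Either route isolates the same essential input and leaves only routine Noetherian bookkeeping.
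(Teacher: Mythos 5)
Your proposal is correct, but it takes a genuinely different route from the one the paper relies on. The paper itself does not prove this theorem: it quotes it as a special case of the result of \cite{DDF}, and the finiteness argument behind that result (made concrete in the rest of Section 2 of the paper) is of a different nature. There one embeds $F_{2d}'$ into $C_{2d}$ via Shmel'kin's theorem, observes that $C_{2d}^{\delta}$ is the homogeneous component of degree one in $A_d\cup B_d$ of the algebra of constants $K[A_d,B_d,X_d,Y_d]^{\delta}$ --- finitely generated by Weitzenb\"ock's theorem, and in the present special case with explicit generators $a_i$ and $w_{pq}$ coming from the proven Nowicki conjecture --- so that $C_{2d}^{\delta}$ is a finitely generated $K[X_d,Y_d]^{\delta}$-module; since $K[X_d,Y_d]^{\delta}$ is a finitely generated algebra, hence Noetherian, the submodule $(F_{2d}')^{\delta}=\varepsilon(F_{2d}')\cap C_{2d}^{\delta}$ is then finitely generated. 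You share the first step (the Shmel'kin embedding, together with the useful reformulation via Lemma \ref{Lie element} that $\varepsilon(F_{2d}')=\ker\theta$ for an equivariant map $\theta:C_{2d}\to K[X_d,Y_d]$), but your finiteness input is different: you integrate $\delta$ to a rational $SL_2$-action and apply the Hadziev/Grosshans transfer principle $M^{U}\cong(M\otimes_K K[u,v])^{SL_2}$ plus the Hilbert--Nagata theorem in module form directly to $M=\ker\theta$, never invoking Weitzenb\"ock's theorem or the Nowicki conjecture. (Your two ``alternatives'' --- transfer plus module-form Hilbert--Nagata, and Grosshans applied to the square-zero extension $R\oplus M$ --- are essentially the same argument, since the square-zero trick is the standard proof of the module form.) What the paper's route buys is explicitness: the generators $a_i,w_{pq}$ of $C_{2d}^{\delta}$ are precisely what the main results of the paper require, whereas your argument is purely existential and produces no generating set. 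What your route buys is uniformity: it applies verbatim to any $\delta$-stable submodule and, with one adjustment, to any Weitzenb\"ock derivation. That adjustment matters, since the theorem as stated (and the result of \cite{DDF}) allows arbitrary $J(\delta)$, while you assume only $2\times 2$ cells; to cover the general case you must replace $V^{\oplus d}$ by the direct sum of irreducible $\mathfrak{sl}_2$-modules matching the Jordan cells of $\delta$ and define the action on $A_d\cup B_d$ accordingly --- a presentational restriction in your write-up rather than a genuine gap.
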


Since it will not cause misunderstanding with the notation for the polynomial algebra we shall use the notation
\[
K[X_d,Y_d]^{\delta}=K[X_d,U]=K[X_d,u_{pq}\mid 1\leq p<q\leq d]
\]
for the algebra
generated by $X_d$ and the elements $U=\{u_{pq}\mid 1\leq i<j<k\leq d\}$ defined in (\ref{u_pq}).
Drensky and Makar-Limanov \cite{DML} showed that the algebra $K[X_d,Y_d]^{\delta}$
has the following defining relations
\begin{equation} \label{S1}
x_iu_{jk}-x_ju_{ik}+x_ku_{ij}=0,\quad 1\leq i<j<k\leq d.
\end{equation}
\begin{equation}  \label{R1}
u_{ij}u_{kl}-u_{ik}u_{jl}+u_{il}u_{jk}=0,\quad 1\leq i<j<k<l\leq d.
\end{equation}
and gave a {\it canonical} linear basis consisting of the elements of the form
\begin{equation}  \label{B1}
x_{i_1}\cdots x_{i_m}u_{k_1l_1}\cdots u_{k_sl_s}
\end{equation}
such that the generators $u_{k_{\alpha}l_{\alpha}}$ and $u_{k_{\beta}l_{\beta}}$
do not intersect each other and $u_{k_{\alpha}l_{\alpha}}$ does
not cover  $x_{i_{\gamma}}$ for any $\alpha,\beta,\gamma$.
Here each $u_{k_{\alpha}l_{\alpha}}$ is identified with the open interval $(k_{\alpha},l_{\alpha})$ on the real line. The generators
$u_{k_{\alpha}l_{\alpha}}$ and $u_{k_{\beta}l_{\beta}}$ intersect each other if the intervals $(k_{\alpha},l_{\alpha})$ and $(k_{\beta},l_{\beta})$
have a nonempty intersection and are not contained in each other. We say also that
$u_{k_{\alpha}l_{\alpha}}$ covers  $x_{i_{\gamma}}$ if $i_{\gamma}$ belongs to the open interval  $(k_{\alpha},l_{\alpha})$.
The order of generators in this basis is assumed to be as follows: $p_1 \leq \cdots \leq p_s$ and if $p_n=p_{n+1}$, then $q_n\leq q_{n+1}$;
and the order among $x_{i_{\gamma}}$ is such that $i_1 \leq \cdots \leq i_m$.

As a direct consequence of the affirmative answer to the Nowicki Conjecture
the algebra of constants $K[A_d,B_d,X_d,Y_d]^{\delta}$ of the derivation $\delta$ acting on the polynomial algebra $K[A_d,B_d,X_d,Y_d]$
is generated by $A_d,X_d$ and the determinants
\[
\begin{vmatrix}
a_p & b_p \\
a_q &b_q\\
\end{vmatrix},\quad
u_{pq}=\begin{vmatrix}
x_p & y_p \\
x_q &y_q\\
\end{vmatrix},\quad 1\leq p<q\leq d,
\]
\begin{equation}\label{w_pq}
w_{pq}=a_py_q-b_px_q=\begin{vmatrix}
a_p & b_p \\
x_q &y_q\\
\end{vmatrix},\quad p,q=1,\ldots,d.
\end{equation}
Hence the $K[X_d,U]$-module $C_{2d}^{\delta}$ is generated by the elements $a_1,\dots,a_d$
and the determinants (\ref{w_pq}),
and as a vector space $C_{2d}^{\delta}$ is spanned by the elements of the form
\begin{equation} \label{span1}
a_{i_0}x_{i_1}\cdots x_{i_m}u_{k_1l_1}\cdots u_{k_sl_s}
\end{equation}
\begin{equation} \label{span2}
w_{p_0q_0}x_{j_1}\cdots x_{j_n}u_{p_1q_1}\cdots u_{p_rq_r}
\end{equation}
for $i_0,p_0,q_0=1,\ldots,d$. Ordering the elements $A_d\cup B_d\cup X_d\cup Y_d$
and assuming that the elements from $A_d$ and $B_d$ precede, respectively, the elements from $X_d$ and $Y_d$,
we obtain as an application of (\ref{S1}) and (\ref{R1})
that the $K[X_d,U]$-module $C_{2d}^{\delta}$ has the following defining relations
\begin{equation} \label{S2}
a_iu_{jk}-w_{ik}x_j+w_{ij}x_k=0,\quad 1\leq i\leq d, \,1\leq j<k\leq d,
\end{equation}
\begin{equation}  \label{R2}
w_{ij}u_{kl}-w_{ik}u_{jl}+w_{il}u_{jk}=0,\quad 1\leq i\leq d, \,1\leq j<k<l\leq d.
\end{equation}
In order to fix a basis of $C_{2d}^{\delta}$ as a vector space,
the factors $x_{i_1}\cdots x_{i_m}u_{k_1l_1}\cdots u_{k_sl_s}$ and $x_{j_1}\cdots x_{j_n}u_{p_1q_1}\cdots u_{p_rq_r}$ of the elements
(\ref{span1}) and (\ref{span2}) have to satisfy the restrictions in (\ref{B1}). Additionally, for the elements in (\ref{span2})
we require $q_0\leq p_1$.

\section{Main Results}
In this section we give the generators of the $K[X_d,Y_d]^{\delta}$-module of constants $(F_{2d}')^{\delta}$ in the commutator ideal $F_{2d}'$
of the free metabelian Lie algebra $F_{2d}$.
Since $(F_{2d}')^{\delta}$ is canonically embedded in $C_{2d}^{\delta}$ we shall work in $C_{2d}^{\delta}$ instead of directly in $(F_{2d}')^{\delta}$.

\begin{definition}
We define the $K[X_d,U]$-submodule $L$ of $C_{2d}^{\delta}$ generated by the elements
\begin{equation} \label{g1}
w_{ii},\quad \quad 1\leq i\leq d,
\end{equation}
\begin{equation} \label{g2}
w_{ij}+w_{ji},\quad 1\leq i<j\leq d,
\end{equation}
\begin{equation}\label{g3}
a_ix_j-a_jx_i,\quad 1\leq i<j\leq d,
\end{equation}
\begin{equation}\label{g4}
a_iu_{pq}-w_{pq}x_i,\, 1\leq i\leq d, \, 1\leq p<q\leq d,
\end{equation}
\begin{equation} \label{g6}
a_iu_{jk}-a_ju_{ik}+a_ku_{ij},\quad 1\leq i<j<k\leq d,
\end{equation}
\begin{equation}\label{g5}
w_{ij}u_{pq}-w_{pq}u_{ij}, \, 1\leq i<j\leq d,  \, 1\leq p<q\leq d.
\end{equation}
\end{definition}
By Lemma \ref{Lie element}, one can easily observe that the generating elements (\ref{g1})--(\ref{g5}) of $L$ are Lie elements,
i.e., images of elements in the commutator ideal $F_{2d}'$ of $F_{2d}$.

\begin{lemma}\label{linear span}
The following elements span the quotient space $C_{2d}^{\delta}/L$.
\begin{equation} \label{span4}
w_{p_0q_0}u_{p_1q_1}\cdots u_{p_rq_r},
\end{equation}
\begin{equation} \label{span3}
a_{i_0}x_{i_1}\cdots x_{i_m}u_{k_1l_1}\cdots u_{k_sl_s},
\end{equation}
where $u_{p_0q_0}u_{p_1q_1}\cdots u_{p_rq_r}$ and $x_{i_0}x_{i_1}\cdots x_{i_m}u_{k_1l_1}\cdots u_{k_sl_s}$
are elements of the form (\ref{B1}); i.e., they are canonical basis elements of the algebra $K[X_d,U]$.
\end{lemma}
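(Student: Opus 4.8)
The plan is to show that every spanning element of $C_{2d}^{\delta}$ listed in (\ref{span1}) and (\ref{span2}) can be rewritten, modulo $L$, as a $K$-linear combination of the canonical elements (\ref{span4}) and (\ref{span3}). Since we already know from the discussion following (\ref{w_pq}) that $C_{2d}^{\delta}$ is spanned by elements of the forms (\ref{span1}) and (\ref{span2}), it suffices to handle these two families separately and bring each into canonical form.

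For the elements (\ref{span1}), namely $a_{i_0}x_{i_1}\cdots x_{i_m}u_{k_1l_1}\cdots u_{k_sl_s}$, the goal is to reach the canonical form (\ref{span3}), in which the monomial $x_{i_0}x_{i_1}\cdots x_{i_m}u_{k_1l_1}\cdots u_{k_sl_s}$ obeys the non-intersecting/non-covering restrictions of (\ref{B1}). The relations available are the defining relations (\ref{S2}) and (\ref{R2}) of $C_{2d}^{\delta}$, together with the generators (\ref{g3}), (\ref{g4}), and (\ref{g6}) of $L$. Concretely: relation (\ref{g6}) is the straightening rule that lets one reorder the index $i_0$ against the $u$-factors exactly as (\ref{S1}) straightens the $x$-indices in $K[X_d,U]$; relation (\ref{g3}) lets one commute $a$-indices past $x$-indices up to an element of $L$; and relation (\ref{g4}) lets one trade a product $a_i u_{pq}$ (when the interval $(p,q)$ covers $i$, i.e. the forbidden configuration) for $w_{pq}x_i$, which then feeds into the other family. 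I would argue by an induction, ordered first by the number $s$ of $u$-factors and then by a suitable complexity measure counting the intersecting/covering violations, that repeated application of these relations terminates in a canonical element (\ref{span3}) modulo $L$.

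For the elements (\ref{span2}), namely $w_{p_0q_0}x_{j_1}\cdots x_{j_n}u_{p_1q_1}\cdots u_{p_rq_r}$, the target is (\ref{span4}), which has no $x$-factors at all and has the $u$-factors in canonical (\ref{B1}) form. The key tools are the generators (\ref{g1}), (\ref{g2}) and (\ref{g5}) of $L$: the first two say that modulo $L$ we may assume the first index of $w$ is strictly below the second (since $w_{ii}\equiv 0$ and $w_{ji}\equiv -w_{ij}$), and (\ref{g5}) says that modulo $L$ the factor $w_{p_0q_0}$ \emph{commutes} with each $u_{ij}$, which combined with (\ref{R2}) lets one straighten the $u$-part. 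The essential reduction is eliminating the $x_{j}$-factors: relation (\ref{S2}), rewritten as $w_{ik}x_j-w_{ij}x_k=a_iu_{jk}$, shows that any product of a $w$ with an $x$ is congruent modulo $L$ (via the generator (\ref{g4})) to an $a$-element, thereby pushing $x$-bearing (\ref{span2})-elements back into the first family; I would use this to clear all $x$-factors from (\ref{span2})-elements and land in the $x$-free form (\ref{span4}).

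The main obstacle I expect is proving termination of these two intertwined straightening procedures: because (\ref{g4}) and (\ref{S2}) move elements back and forth between the two families — an $a$ with a covering $u$ becomes a $w$ with an $x$, and a $w$ with an $x$ becomes an $a$ with a $u$ — one must design a single well-founded complexity measure on the disjoint union of both families that strictly decreases under every reduction and thereby rules out cycling. I would set this up by giving priority to reducing the number of $u$-factors and the degree in $X_d$, handling the (\ref{R1})/(\ref{R2})-type straightening of the $u$-products as an inner induction once the $a$- or $w$-head is fixed; verifying that no reduction increases the chosen measure is the delicate bookkeeping, but once that is in place the spanning claim follows immediately by induction on the measure.
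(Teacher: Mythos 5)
Your overall frame --- reducing the spanning elements (\ref{span1}) and (\ref{span2}) of $C_{2d}^{\delta}$ to the canonical forms (\ref{span3}) and (\ref{span4}) modulo $L$ --- is the same as the paper's, and your treatment of the $w$-family agrees with its proof: (\ref{g1}) and (\ref{g2}) normalize the head, (\ref{g5}) together with (\ref{R2}) straighten the $u$-part, and (\ref{g4}), used in the direction $w_{pq}x_i\equiv a_iu_{pq} \pmod{L}$, clears the $x$-factors. The genuine gap is in the $a$-family. You propose to eliminate the forbidden configuration $a_{i_0}u_{p_lq_l}$ with $i_0\in(p_l,q_l)$ by applying (\ref{g4}) in the reverse direction, $a_{i_0}u_{p_lq_l}\equiv w_{p_lq_l}x_{i_0}$, ``feeding into the other family''; but your own rule for the other family converts $w_{p_lq_l}x_{i_0}$ straight back to $a_{i_0}u_{p_lq_l}$, so the two reductions form a literal two-step cycle. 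Moreover, the kind of measure you suggest cannot exist for this rule set: the pass $a_iu_{pq}\to w_{pq}x_i$ trades one $u$-factor for one $x$-factor and the pass $w_{pq}x_i\to a_iu_{pq}$ trades back, so whichever of ``number of $u$-factors'' or ``degree in $X_d$'' you give priority to, one of your two reductions strictly increases it. This is not delicate bookkeeping left to the reader; no well-founded measure is compatible with using (\ref{g4}) in both directions.

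The missing idea --- which is the heart of the paper's proof --- is that covering violations in the $a$-family must be resolved \emph{inside} that family, by (\ref{g6}): for $i<j<k$ one has $a_ju_{ik}\equiv a_iu_{jk}+a_ku_{ij} \pmod{L}$, the exact analogue of how (\ref{S1}) removes coverings in $K[X_d,U]$. You do mention (\ref{g6}), but you assign it only a ``reordering'' role, whereas it is precisely the covering-resolution rule. With that choice the whole procedure becomes one-directional: (\ref{g4}) is only ever used to push $x$-bearing elements of type (\ref{span2}) into the family (\ref{span1}), never back, and each application of (\ref{g6}) replaces the interval $(p_l,q_l)$ by the two strictly shorter subintervals $(i_0,q_l)$ and $(p_l,i_0)$, which cannot cover the untouched indices $i_1,\ldots,i_m$ and which make termination immediate. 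No global complexity measure over the union of the two families is needed once the reductions flow only one way.
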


\begin{proof}
We shall work in the vector space $C_{2d}^{\delta}$ modulo the subspace $L$.
It is sufficient to handle the basis elements of $C_{2d}^{\delta}$ of the form (\ref{span1}) and (\ref{span2}).
Starting with the element $w_{p_0q_0}x_{j_1}\cdots x_{j_n}u_{p_1q_1}\cdots u_{p_rq_r}$ in (\ref{span2})
we apply the relation $w_{pq}x_i\equiv a_iu_{pq}$ (mod $L$) from (\ref{g4})
and bring the element from (\ref{span2}) to an element from (\ref{span1}). If the element from (\ref{span2})
is of the form $w_{p_0q_0}u_{p_1q_1}\cdots u_{p_rq_r}$, then (\ref{g1}) and (\ref{g2}) imply that we may assume that $p_0<q_0$.
Then using the relation (\ref{R2}) we obtain that
the interval $(p_0,q_0)$ does not intersect with $(p_1,q_1),\ldots,(p_r,q_r)$, and the generator (\ref{g5}) fixes the order among
$u_{p_0q_0},u_{p_1q_1}\ldots, u_{p_rq_r}$. This closes the case (\ref{span2}).
Now we consider the element $a_{i_0}x_{i_1}\cdots x_{i_m}u_{k_1l_1}\cdots u_{k_sl_s}$ in (\ref{span1}).
By assumption, the integers $i_1,\ldots,i_m$ do not belong to the open intervals $(p_l,q_l)$.
If $i_0\in (p_l,q_l)$ for some $l=1,\ldots,s$, then the relation
$a_ju_{ik}\equiv a_iu_{jk}+a_ku_{ij}$ (mod $L$) from (\ref{g6}) replaces $a_{i_0}u_{p_lq_l}$ with
$a_{p_l}u_{i_0q_l}$ and $a_{q_l}u_{p_li_0}$. Since the intervals $(i_0,q_l)$ and $(p_l,i_0)$ are shorter than the interval $(p_l,q_l)$,
the integers $i_1,\ldots,i_m$ are not covered by the intervals $(p_1,q_1),\ldots,(p_s,q_s)$.
In finite number of steps the same holds for the integer $i_0$.
Finally the generator (\ref{g3}) fixes the order among
$x_{i_t}$, $t=0,1,\ldots,m$.
\end{proof}

\begin{theorem}
The $K[X_d,U]$-module $L$ consists of all Lie elements in $C_{2d}^{\delta}$.
\end{theorem}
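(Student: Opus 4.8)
The plan is to prove the two inclusions separately. One inclusion is already recorded in the excerpt: the remark following the Definition notes that each of the generators (\ref{g1})--(\ref{g5}) is a Lie element, and since the Lie elements of $C_{2d}$ form a $K[X_d,Y_d]$-submodule (Lemma \ref{Lie element}), their $K[X_d,U]$-span $L$ consists of Lie elements. So it remains to show that \emph{every} Lie element of $C_{2d}^{\delta}$ already lies in $L$.

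For the converse I would set up the ``Lie defect'' map. Define the $K$-linear map $\varphi\colon C_{2d}\to K[X_d,Y_d]$ by $\varphi\left(\sum_i a_if_i+\sum_i b_ig_i\right)=\sum_i x_if_i+\sum_i y_ig_i$. By Lemma \ref{Lie element} an element of $C_{2d}$ is a Lie element exactly when it lies in $\ker\varphi$. A short computation using $\delta(a_i)=\delta(x_i)=0$, $\delta(b_i)=a_i$, $\delta(y_i)=x_i$ shows that $\varphi$ sends constants to constants, so $\varphi$ restricts to a map $C_{2d}^{\delta}\to K[X_d,Y_d]^{\delta}=K[X_d,U]$. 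Since the generators (\ref{g1})--(\ref{g5}) are Lie elements, $\varphi(L)=0$, and hence $\varphi$ descends to $\overline{\varphi}\colon C_{2d}^{\delta}/L\to K[X_d,U]$. With this in hand the task becomes purely about $\overline{\varphi}$: a class in $C_{2d}^{\delta}/L$ is a Lie element precisely when it lies in $\ker\overline{\varphi}$, so it suffices to prove that $\overline{\varphi}$ is injective.

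Then I would compute $\varphi$ on the spanning elements of Lemma \ref{linear span}. Writing $w_{pq}=a_py_q-b_px_q$ gives, for a generator (\ref{span4}), $\varphi(w_{p_0q_0}u_{p_1q_1}\cdots u_{p_rq_r})=(x_{p_0}y_{q_0}-x_{q_0}y_{p_0})u_{p_1q_1}\cdots u_{p_rq_r}=u_{p_0q_0}u_{p_1q_1}\cdots u_{p_rq_r}$, while for (\ref{span3}) one gets $\varphi(a_{i_0}x_{i_1}\cdots x_{i_m}u_{k_1l_1}\cdots u_{k_sl_s})=x_{i_0}x_{i_1}\cdots x_{i_m}u_{k_1l_1}\cdots u_{k_sl_s}$. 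By construction both outputs are exactly the canonical monomials (\ref{B1}) attached to these spanning elements: (\ref{span4}) hits the canonical monomials that are pure products of $u$'s, and (\ref{span3}) hits those carrying at least one factor from $X_d$. The assignment is a bijection between the spanning set and the set of all nonconstant canonical monomials — a pure-$u$ monomial determines the $w$-carrying factor as its first $u$ (the constraint $q_0\le p_1$), and a monomial with an $x$-factor determines the $a$-carrying index as its smallest one (the ordering $i_0\le\cdots\le i_m$). Since the canonical monomials (\ref{B1}) are a $K$-basis of $K[X_d,U]$ by Drensky and Makar-Limanov, the images under $\overline{\varphi}$ of the spanning elements are distinct and $K$-linearly independent. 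A spanning set whose images are linearly independent is automatically a basis on which the map is injective, so $\overline{\varphi}$ is injective, $\ker\overline{\varphi}=0$, and every Lie element of $C_{2d}^{\delta}$ lies in $L$. Together with the first inclusion this yields the theorem.

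The main obstacle is the bookkeeping in the previous paragraph: one must check that the defining relations (\ref{S2}), (\ref{R2}) and the reductions performed in Lemma \ref{linear span} are compatible with the canonical ordering constraints in (\ref{B1}), so that each nonconstant canonical monomial of $K[X_d,U]$ is the $\varphi$-image of exactly one spanning element and no spurious collisions or cancellations occur. Everything else is formal once $\overline{\varphi}$ is shown to be well defined and its values on the spanning set are identified with the Drensky--Makar-Limanov basis.
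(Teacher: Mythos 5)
Your proposal is correct and takes essentially the same approach as the paper: your ``Lie defect'' map $\varphi$ is exactly the criterion of Lemma \ref{Lie element}, and proving injectivity of $\overline{\varphi}$ by evaluating it on the spanning elements of Lemma \ref{linear span} and invoking the linear independence of the canonical monomials (\ref{B1}) of $K[X_d,U]$ is the same coefficient-vanishing argument the paper gives. The only difference is presentational: the paper takes a Lie element, reduces it modulo $L$, and concludes directly that all coefficients vanish, while you package that computation as injectivity of an induced map on $C_{2d}^{\delta}/L$.
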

\begin{proof} Let
\[
\sum\xi_{ikl}a_{i_0}x_{i_1}\cdots x_{i_m}u_{k_1l_1}\cdots u_{k_sl_s}
+\sum\psi_{pq}w_{p_0q_0}u_{p_1q_1}\cdots u_{p_rq_r}
\]
be a Lie element in the vector space $C_{2d}^{\delta}/L$. Then by Lemma \ref{Lie element}
and Lemma \ref{linear span} we have that
\[
\sum\xi_{ikl}x_{i_0}x_{i_1}\cdots x_{i_m}u_{k_1l_1}\cdots u_{k_sl_s}
+\sum\psi_{pq}u_{p_0q_0}u_{p_1q_1}\cdots u_{p_rq_r}=0,
\]
where $x_{i_0}x_{i_1}\cdots x_{i_m}u_{k_1l_1}\cdots u_{k_sl_s}$ and $u_{p_0q_0}u_{p_1q_1}\cdots u_{p_rq_r}$
are basis elements of the algebra $K[X_d,U]$.
Clearly each element from the first sum is linearly independent from the elements of the second sum, since there is
at least one multiplier of the form $x_{i_0}$ in each summand of the first sum which does not appear in the second sum.
This implies that
\[
\sum\xi_{ikl}x_{i_0}x_{i_1}\cdots x_{i_m}u_{k_1l_1}\cdots u_{k_sl_s}=\sum\psi_{pq}u_{p_0q_0}u_{p_1q_1}\cdots u_{p_rq_r}=0.
\]
Thus $\xi_{ikl}=0=\psi_{pq}$ for all $i,k,l,p,q$,
because $x_{i_0}x_{i_1}\cdots x_{i_m}u_{k_1l_1}\cdots u_{k_sl_s}$ and $u_{p_0q_0}u_{p_1q_1}\cdots u_{p_rq_r}$
uniquely determine the monomials
$a_{i_0}x_{i_1}\cdots x_{i_m}u_{k_1l_1}\cdots u_{k_sl_s}$ and $w_{p_0q_0}u_{p_1q_1}\cdots u_{p_rq_r}$, respectively.
\end{proof}

Finally the generators of $(F_{2d}')^{\delta}$ are obtained by computing the inverse images of generators of $L$.
\begin{corollary}
The $K[X_d,U]$-module $(F_{2d}')^{\delta}$ is generated by the following elements
\[
[x_i,y_i],\quad \quad 1\leq i\leq d,
\]
\[
[x_i,x_j],\quad 1\leq i<j\leq d,
\]
\[
[x_i,y_j]+[x_j,y_i],\quad 1\leq i<j\leq d,
\]
\[
[x_i,x_p,y_q]-[x_i,y_p,x_q],\, 1\leq i\leq d, \, 1\leq p<q\leq d,
\]
\[
[x_i,x_j,y_k]-[x_i,x_k,y_j]+[x_j,x_k,y_i],\quad 1\leq i<j<k\leq d,
\]
and
\[
[x_i,x_p,y_j,y_q]+[y_i,y_p,x_j,x_q]-[x_i,y_p,y_j,x_q]-[y_i,x_p,x_j,y_q],
\]
where $1\leq i<j\leq d$, $1\leq p<q\leq d$.
\end{corollary}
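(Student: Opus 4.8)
The plan is to observe that the Corollary is an immediate translation of the preceding Theorem back into the language of $F_{2d}'$. First I would recall that under the Shmel'kin monomorphism $\varepsilon$ the commutator ideal $F_{2d}'$ is identified with the Lie elements of $C_{2d}$, i.e. those satisfying the relation of Lemma \ref{Lie element}, and that $\varepsilon$ commutes with $\delta$ because $\delta$ acts on $a_i,b_i$ exactly as on $x_i,y_i$. Consequently $\varepsilon$ carries $(F_{2d}')^{\delta}$ bijectively onto the set of Lie elements of $C_{2d}^{\delta}$, which by the Theorem is precisely the $K[X_d,U]$-module $L$. Since $\varepsilon$ is $K[X_d,Y_d]$-linear, hence $K[X_d,U]$-linear, the $\varepsilon$-preimages of the module generators (\ref{g1})--(\ref{g5}) of $L$ form a generating set of the $K[X_d,U]$-module $(F_{2d}')^{\delta}$: a module isomorphism sends a generating set to a generating set. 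So the whole proof reduces to computing these six preimages.

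For the computation I would use the explicit rules $\varepsilon([x_i,x_j]) = a_ix_j - a_jx_i$, $\varepsilon([x_i,y_j]) = a_iy_j - b_jx_i$, $\varepsilon([y_i,y_j]) = b_iy_j - b_jy_i$, together with the recursion $\varepsilon([w,x_j]) = \varepsilon(w)x_j$ and $\varepsilon([w,y_j]) = \varepsilon(w)y_j$. The three quadratic generators are read off at once: $w_{ii} = a_iy_i - b_ix_i = \varepsilon([x_i,y_i])$ gives (\ref{g1}); the identity $w_{ij}+w_{ji} = \varepsilon([x_i,y_j]+[x_j,y_i])$ gives (\ref{g2}); and $a_ix_j - a_jx_i = \varepsilon([x_i,x_j])$ gives (\ref{g3}). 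For the cubic generators I would expand the operators $u_{pq} = x_py_q - x_qy_p$ and $w_{pq} = a_py_q - b_px_q$ and verify $a_iu_{pq} - w_{pq}x_i = \varepsilon([x_i,x_p,y_q]-[x_i,y_p,x_q])$ for (\ref{g4}), and $a_iu_{jk}-a_ju_{ik}+a_ku_{ij} = \varepsilon([x_i,x_j,y_k]-[x_i,x_k,y_j]+[x_j,x_k,y_i])$ for (\ref{g6}); each is a short match of six monomials.

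The only genuinely laborious step, and the place where I expect to spend the effort, is the quartic generator (\ref{g5}), namely $w_{ij}u_{pq}-w_{pq}u_{ij}$. Here I would expand the difference $w_{ij}u_{pq}-w_{pq}u_{ij}$ into its eight monomials in the $a,b,x,y$, and separately expand the image of the proposed element $[x_i,x_p,y_j,y_q]+[y_i,y_p,x_j,x_q]-[x_i,y_p,y_j,x_q]-[y_i,x_p,x_j,y_q]$ by the rules above, taking care with the mixed brackets via $\varepsilon([y_i,x_p]) = b_ix_p - a_py_i$. Checking that the two lists of eight monomials agree term by term is pure bookkeeping, but it must be carried out carefully, since a single sign or index slip would invalidate the candidate generator. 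Once all six preimages are confirmed, the Corollary follows directly from the module isomorphism $\varepsilon\colon (F_{2d}')^{\delta}\to L$.
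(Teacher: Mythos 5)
Your proposal is correct and follows the paper's own route exactly: the paper likewise deduces the Corollary by taking $\varepsilon$-preimages of the generators (\ref{g1})--(\ref{g5}) of $L$, using the Theorem's identification of $L$ with the Lie elements of $C_{2d}^{\delta}$. Your explicit verifications of the six preimages (including the eight-monomial check for (\ref{g5})) are accurate and simply spell out the computation the paper leaves to the reader.
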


\section*{Acknowledgements}

The second named author is very thankful to the Institute of Mathematics and Informatics of
the Bulgarian Academy of Sciences for the creative atmosphere and the warm hospitality during his visit
as a post-doctoral fellow when this project was carried out.

\end{document}